\newtheorem{theorem}{Theorem}
\newtheorem{lemma}{Lemma}
\newtheorem{definition}{Definition}
\newtheorem{corollary}{Corollary}
\newtheorem{remark}{Remark}
\newcommand{\R}{\mathbb{R}}
\newcommand{\T}{\mathcal{T}}
\newcommand{\norm}[1]{\left\lVert#1\right\rVert}
\title{Two-hidden-layer ReLU neural networks and finite elements}
\author{Pengzhan Jin\footnote{National Engineering Laboratory for Big Data Analysis and Applications, Peking University, Beijing 100871, China (jpz@pku.edu.cn).}}
\date{}
\begin{document}

\maketitle

\begin{abstract}
We point out that (continuous or discontinuous) piecewise linear functions on a convex polytope mesh can be represented by two-hidden-layer ReLU neural networks in a weak sense. In addition, the numbers of neurons of the two hidden layers required to weakly represent are accurately given based on the numbers of polytopes and hyperplanes involved in this mesh. The results naturally hold for constant and linear finite element functions. Such weak representation establishes a bridge between two-hidden-layer ReLU neural networks and finite element functions, and leads to a perspective for analyzing approximation capability of ReLU neural networks in $L^p$ norm via finite element functions. Moreover, we discuss the strict representation for tensor finite element functions via the recent tensor neural networks.
\end{abstract}

\section{Introduction}
The properties of neural networks (NNs) have been widely studied. The NNs are proved to be universal approximators in \cite{cybenko1989approximation,hornik1989multilayer}, and shown powerful expressive capability in \cite{e2019barron,siegel2022high}. Especially, the NNs with ReLU \cite{nair2010rectified} activation receive special attention \cite{daubechies2022nonlinear,li2017convergence,lu2019dying,yarotsky2017error}. It is well known that every ReLU NN represents a continuous piecewise linear function \cite{pascanu2013number}. Conversely, whether any continuous piecewise linear function can be expressed by a ReLU NN is an interesting problem. \cite{arora2016understanding} shows that any continuous piecewise linear function can be represented by a ReLU NN with at most $\lceil\log_2(n + 1)\rceil$ hidden layers where $n$ is the dimension. Some estimations of the number of neurons needed for such representation are presented in \cite{he2018relu}. The recent works \cite{chen2022improved,hertrich2021towards} further derive better bounds for representing piecewise linear functions. Besides the discussion on classical ReLU NNs, the representations of continuous piecewise linear
functions with infinite width shallow ReLU NNs defined in an integral form are analyzed in \cite{mccarty2023piecewise}. Moreover, \cite{he2023deep} constructs continuous piecewise high-order polynomial functions by using ReLU and ${\rm ReLU}^2$ activation functions.

The finite element method \cite{brenner2008mathematical,ciarlet2002finite} is a powerful computational technique widely used for solving complex engineering and physical problems governed by differential equations. The aforementioned continuous piecewise linear or higher-order polynomial functions form a principal research subject in finite element methods, with the field's characteristic emphasis on particular domain subdivisions, notably simplex decompositions. As the finite element method has been well developed, we expect to establish a more precise connection between NNs and finite element functions, e.g. constant and linear finite element functions that are piecewise constant on general polytope meshes and piecewise linear on simplex meshes, respectively. Different from the previously mentioned works, here we focus on a weak representation which is equivalent to the strict representation in $L^p$ norm. Note that the strict representation given by \cite{arora2016understanding,chen2022improved,he2018relu,hertrich2021towards} requires deep NNs, while the weak representation can be established with only two hidden layers. We provide the accurate number of neurons needed for such weak representation based on the numbers of polytopes and hyperplanes involved in the mesh, which means the network size needed for weak representation is in fact computable. Another topic in this work is the connection between the tensor finite element functions and the recently proposed tensor neural networks \cite{jin2022mionet}, which are applied to solving high-dimensional eigenvalue problems \cite{wang2022tensor}, based on the fact that the rank-one tensor decomposition transforms high-dimensional integrals into products of low-dimensional integrals. In such a case, a strict representation can be obtained. We additionally show several specific examples and demonstrate how to compute the numbers of neurons for representation given meshes, for constant, linear and tensor finite element functions.

The paper is organized as follows. We first clarify the network architectures to be studied in Section \ref{sec:net}. In Section \ref{sec:constant_linear}, we give the concept of weak representation and prove the main theorem for weak representation of two-hidden-layer ReLU NNs as well as several related corollaries. Subsequently, we show the strict representation of tensor neural networks for tensor finite element functions in Section \ref{sec:tensor}. Finally, Section \ref{sec:conclusions} summarizes this work.

\section{ReLU neural networks}\label{sec:net}
We consider two types of neural networks. The first type is ReLU fully-connected neural networks with two hidden layers which are defined as
\begin{equation}
    f_{NN}(x):=w_3\sigma(W_2\sigma(W_1x+b_1)+b_2),\quad x\in\R^n,
\end{equation}
where $W_1\in\R^{h_1\times n},b_1\in\R^{h_1},W_2\in\R^{h_2\times h_1},b_2\in\R^{h_2},w_3\in\R^{1\times h_2}$, and $\sigma(x):=\max(x,0)$ is the ReLU activation function considered as element-wise mapping for varying dimension. Here $h_1,h_2$ are the numbers of neurons of the two hidden layers, and we denote the set of such neural networks as ${\rm FNN}(h_1,h_2)$.

The second type is the neural network with a tensor representation, which is firstly introduced in MIONet \cite{jin2022mionet} for operator regression. Tensor neural network (TNN) with one hidden layer is defined as
\begin{equation}
    t_{NN}(x_1,\cdots,x_k)=\sum_{j=1}^p(w_1^j\sigma(W_1x_1+b_1))\cdots(w_k^j\sigma(W_kx_k+b_k)),\quad x_i\in\R^{n_i},
\end{equation}
where $n_1+\cdots+n_k=n$, $W_i\in\R^{h_i\times n_i}$, $b_i\in\R^{h_i}$, $w_i^j\in\R^{1\times h_i}$, and $\sigma$ is the activation. Here $p$ is regarded as the rank of this network. In fact, $t_{NN}$ is a tensor which can be written as
\begin{equation}
    t_{NN}=\sum_{j=1}^pl_1^j\otimes\cdots\otimes l_k^j,\quad l_i^j(x):=w_i^j\sigma(W_ix+b_i).
\end{equation}
Especially, here we mainly discuss the TNNs with $n_i=1,k=n$, and the ReLU activation. Denote the set of  such tensor neural networks as ${\rm TNN}^p(h_1,\cdots,h_n)$.

The above two described architectures are both the mappings from $\R^n$ to $\R$, and the TNNs also satisfy the approximation theorem according to \cite{jin2022mionet,wang2022tensor}. 

\section{Constant and linear finite elements}\label{sec:constant_linear}
Here we first consider the constant and linear finite elements, and search for weak representation for such elements via ReLU
neural networks. In this context, constant finite element functions correspond to piecewise constant functions on polytope meshes, while linear finite element functions correspond to continuous piecewise linear functions on simplex meshes. Precise definitions will be given below.

In this section we study the closed polytope $\Omega$ (unnecessary to be convex) in Euclidean space $\R^n$. Assume that $\T$ is a polytope mesh of $\Omega$ whose elements are convex, precisely,
\begin{equation}
    \T=\{\tau_1,\cdots,\tau_{N_{\T}}\},
\end{equation}
where $\tau_i\subset\Omega$ is $n$-dimensional closed convex polytope, $\bigcup_{i=1}^{N_\T}\tau_i=\Omega$, and $\mathring{\tau_i}\cap\mathring{\tau_j}=\varnothing$ for all $i\neq j$. We denote the numbers of interior and boundary hyperplanes of $\T$ by $H_\T^i$ and $H_\T^b$ respectively, then the number of all hyperplanes of $\T$ is $H_\T=H_\T^i+H_\T^b$.

Piecewise linear function on mesh $\T$ is defined as
\begin{equation}
    f(x):=a_{\tau}x+c_{\tau}\quad{\rm for}\quad x\in\mathring{\tau},\tau\in\T,
\end{equation}
where $a_\tau\in\R^{1\times n}$ and $c_\tau\in\R$ are related to $\tau$. Note that we do not care about the values of $f$ on $\partial\tau$. Denote the set consisting of all the piecewise linear functions on mesh $\T$ by $\mathcal{V}_\T$. It is obvious that $\mathcal{V}_\T$ contains all the constant finite element functions on $\T$, denoted by $\mathcal{V}_\T^0$, i.e.
\begin{equation}
    \mathcal{V}_\T^0:=\{f\in\mathcal{V}_\T:f|_{\mathring{\tau}}{\rm\ is\ constant\ for\ any\ }\tau\in\mathcal{T}\}.
\end{equation}
In addition, $\mathcal{V}_\T$ will contain all the linear finite element functions on $\T$ denoted by $\mathcal{V}_\T^1$ if $\T$ is given as a simplex mesh for finite element methods, i.e.
\begin{equation}
    \mathcal{V}_\T^1:=\{f\in\mathcal{V}_\T:f{\rm\ is\ continuous\ on\ }\Omega\}{\rm\ when\ \T\ is\ a\ simplex\ mesh}.
\end{equation}
For small $\epsilon>0$, denote
\begin{equation}
    \Omega_\T^\epsilon:=\bigcup_{\tau\in\T}\{x\in\tau:d(x,\partial\tau)\geq\epsilon\},
\end{equation}
where $d(x,\partial\tau)$ is the distance between $x$ and $\partial\tau$.

\subsection{Weak representation}
We begin with the definition of weak representation.
\begin{definition}
Assume that $\T$ is a convex polytope mesh of $\Omega$, $\mathcal{V}$ is a set of functions on $\Omega$. We say a function set $\mathcal{F}$ weakly represents $\mathcal{V}$ on $\T$, if for any $v\in\mathcal{V}$ and any $\epsilon>0$, there exists a $f\in\mathcal{F}$ such that $f|_{\Omega_\T^\epsilon}=v|_{\Omega_\T^\epsilon}$ and $\norm{f}_{L^\infty(\Omega)}\leq \norm{v}_{L^\infty(\Omega)}$.
\end{definition}
The main theorem is given as follows.
\begin{theorem}\label{thm:cl_weak_rep}
${\rm FNN}(2H_\T^i+H_\T^b,N_\T+1)$ weakly represents $\mathcal{V}_\T$ on $\T$.
\end{theorem}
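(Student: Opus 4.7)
The plan is to construct $f_{NN}$ explicitly, placing the mesh geometry in the first layer and one ``activation per polytope'' in the second.

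First I would assign the first layer as follows: for each interior hyperplane $H$ of $\T$, two neurons $\sigma(\ell_H)$ and $\sigma(-\ell_H)$, where $\ell_H$ is a normalized signed-distance affine function vanishing on $H$; for each boundary hyperplane $H\subset\partial\Omega$, a single neuron $\sigma(-\ell_H)$ oriented so that it is nonzero on $\Omega$. This exhausts the $2H_\T^i+H_\T^b$ budget, and since the normals of the bounding hyperplanes of $\Omega$ span $\R^n$, every affine function on $\Omega$---in particular each $a_\tau x+c_\tau$---is a linear combination of these outputs.

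Next, for each polytope $\tau$ I would form the ``excess'' $s_\tau(x):=\sum_{H\in\partial\tau}\sigma(\ell^\tau_H(x))$ with $\ell^\tau_H\le 0$ on $\tau$; this is a linear combination of first-layer outputs (boundary-hyperplane contributions vanish on $\Omega$), satisfies $s_\tau\equiv 0$ on $\tau$, and is bounded below by $d(x,\tau)$ outside $\tau$. In the second layer I would place one ReLU unit per polytope with pre-activation $a_\tau x+c_\tau+K-Ls_\tau(x)$, together with one further unit outputting the constant $1$ (zero input weights, bias $1$), and read the network's output as the sum of the $N_\T$ polytope units minus $K$ times the constant unit---giving exactly $N_\T+1$ second-layer units. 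Choosing $K\ge\norm{v}_{L^\infty(\Omega)}$ and $L\ge 2K/\epsilon$, the match $f_{NN}=v$ on $\Omega_\T^\epsilon$ follows from a direct calculation: for $x\in\tau\cap\Omega_\T^\epsilon$ the $\tau$-unit outputs $a_\tau x+c_\tau+K$ (since $s_\tau=0$), while every $\tau'$-unit with $\tau'\ne\tau$ vanishes (since $s_{\tau'}(x)\ge\epsilon$ forces its pre-activation to be $\le 0$); summing and subtracting $K$ recovers $v(x)$.

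The main obstacle will be the $L^\infty$ bound on the transition strip $\Omega\setminus\Omega_\T^\epsilon$, where several polytope units may fire simultaneously and the naive sum risks exceeding $\norm{v}_{L^\infty(\Omega)}$. To close the proof I would refine the pre-activations so that on this strip $f_{NN}$ reduces to a convex combination of the neighboring affine pieces $a_{\tau'}x+c_{\tau'}$, thereby gaining the pointwise bound by $\norm{v}_{L^\infty(\Omega)}$ automatically. I expect this is precisely what pins down the count $N_\T+1$ as tight: one slot per polytope to carry its affine piece, plus one extra slot for the constant offset needed to make the convex-combination structure globally consistent, with no room to spare for additional cancellation units.
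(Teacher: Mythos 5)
Your overall blueprint matches the paper's (one second-layer ReLU unit per polytope plus one constant unit, first layer indexed by directed hyperplanes, output $=$ sum of units minus a constant), and the agreement $f=v$ on $\Omega_\T^\epsilon$ can be pushed through. But the step you defer to the end is the actual crux, and as written the construction fails the second clause of the definition of weak representation, $\norm{f}_{L^\infty(\Omega)}\leq\norm{v}_{L^\infty(\Omega)}$. Your $\tau$-unit has pre-activation $a_\tau x+c_\tau+K-Ls_\tau(x)$ with $s_\tau\equiv 0$ on \emph{all} of $\tau$ and $s_\tau$ growing only continuously from $0$ as $x$ leaves $\tau$; so the unit fires at full height $a_\tau x+c_\tau+K$ throughout $\tau$ and decays to zero only in a collar \emph{outside} $\tau$. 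At a point in $\mathring{\tau}$ close to a facet shared with $\tau'$, both the $\tau$- and $\tau'$-units therefore output roughly $v(x)+K$, and $f(x)\approx 2v(x)+K$, up to $3\norm{v}_{L^\infty}$. Your proposed repair---making $f$ a convex combination of the neighboring affine pieces on the strip---is not substantiated, and for discontinuous $v\in\mathcal{V}_\T$ it is not something one ReLU unit per polytope can deliver; this is precisely the hard part of the theorem. Two smaller inaccuracies: $\sum_{H}\sigma(\ell^\tau_H)\geq d(x,\tau)$ is false in general (near a sharp vertex the sum is $d(x,\tau)$ times a small angle factor; you only get a Hoffman-type bound, or a positive lower bound on the compact set $\Omega_\T^\epsilon\setminus\tau$), and $L\geq 2K/\epsilon$ is not enough to kill the $\tau'$-unit on $\tau^\epsilon$ because the affine piece $a_{\tau'}x+c_{\tau'}$ extended outside $\tau'$ can exceed $K$ on $\Omega$.

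The paper closes exactly this gap by putting the decay collar \emph{inside} each polytope and making each bump exactly supported in its own closed polytope. The first-layer biases are shifted to $b_i-\epsilon|w_i|$ (facets pushed inward by $\epsilon$), the facet neurons are oriented inward ($h_i\geq 0$ on $\tau$), and the second-layer weights are taken as $-(\mu_i+t\lambda_i)$, where $\mu$ solves $\sum_i\mu_iw_i^T=-a^T$ and $\lambda_i>0$ satisfy $\sum_i\lambda_iw_i=0$ (Lemma \ref{lem:positive}). Since all these coefficients are negative and $\sigma(y)\geq y$, deactivating any neuron only decreases the pre-activation; hence the bump equals $v+R$ on $\tau^\epsilon$, stays in $[0,2R]$ on $\tau\setminus\tau^\epsilon$, and for $t$ large enough is identically zero off $\tau$. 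Then at almost every point of $\Omega$ at most one bump is nonzero and $\norm{f}_{L^\infty(\Omega)}\leq R$ comes for free. If you want to keep your outward-penalty formulation, the minimal fix is to replace $\ell^\tau_H$ by $\ell^\tau_H+\epsilon$ so that the unit is already dead on $\partial\tau$ and its transition happens inside the $\epsilon$-collar of $\tau$, to take $L$ large enough to dominate both the misestimated $s_\tau$ and the extended affine pieces, and to check that the affine part $a_\tau x+c_\tau$ is realized by first-layer outputs that are genuinely affine on all of $\Omega$ (a boundary-hyperplane neuron is affine only on one side of its hyperplane, which is an issue for nonconvex $\Omega$; the paper avoids this by using only $\tau$'s own facet normals, which span $\R^n$).
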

\noindent Recall that the constant finite element functions on $\T$ are contained in $\mathcal{V}_\T$, while the linear finite element functions on $\T$ will be contained in $\mathcal{V}_\T$ given $\T$ is further a simplex mesh for finite element methods.
\begin{corollary}\label{cor:cfe_weak_rep}
${\rm FNN}(2H_\T^i+H_\T^b,N_\T+1)$ weakly represents the set of constant finite element functions $\mathcal{V}_\T^0$ on $\T$.
\end{corollary}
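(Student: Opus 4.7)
The plan is to obtain Corollary \ref{cor:cfe_weak_rep} as an immediate consequence of Theorem \ref{thm:cl_weak_rep}. The key observation is that every constant finite element function on $\T$ is a special case of a piecewise linear function on $\T$: a function $f \in \mathcal{V}_\T^0$ satisfies $f(x) = a_\tau x + c_\tau$ on $\mathring{\tau}$ with the particular choice $a_\tau = 0 \in \R^{1\times n}$ for every $\tau \in \T$. In other words, taking the linear part to vanish on each cell shows directly from the definitions that $\mathcal{V}_\T^0 \subseteq \mathcal{V}_\T$.

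Given this inclusion, the weak-representation property is inherited for free. First I would fix an arbitrary $v \in \mathcal{V}_\T^0$ and an arbitrary $\epsilon > 0$. Since $v \in \mathcal{V}_\T$, Theorem \ref{thm:cl_weak_rep} supplies some $f \in {\rm FNN}(2H_\T^i + H_\T^b,\, N_\T + 1)$ with $f|_{\Omega_\T^\epsilon} = v|_{\Omega_\T^\epsilon}$ and $\norm{f}_{L^\infty(\Omega)} \leq \norm{v}_{L^\infty(\Omega)}$. These are exactly the two conditions required by the definition of weak representation applied to $\mathcal{V}_\T^0$, so the network family ${\rm FNN}(2H_\T^i + H_\T^b,\, N_\T + 1)$ weakly represents $\mathcal{V}_\T^0$ on $\T$.

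There is essentially no obstacle: all the nontrivial construction, namely choosing weights and biases in the two hidden layers so as to isolate each cell (first layer, $2H_\T^i + H_\T^b$ neurons for the hyperplane indicators) and to assemble the cell-wise contributions (second layer, $N_\T + 1$ neurons), has already been carried out in Theorem \ref{thm:cl_weak_rep} for the broader class $\mathcal{V}_\T$. The neuron counts are therefore automatically sufficient for the subclass $\mathcal{V}_\T^0$, and in particular no reduction of the architecture is claimed here. The only step worth recording explicitly is the set inclusion $\mathcal{V}_\T^0 \subseteq \mathcal{V}_\T$, after which the corollary is a one-line specialization of the main theorem.
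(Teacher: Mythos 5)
Your argument is correct and matches the paper exactly: the paper also derives this corollary as an immediate consequence of Theorem \ref{thm:cl_weak_rep} via the inclusion $\mathcal{V}_\T^0\subseteq\mathcal{V}_\T$ (constant pieces being the special case $a_\tau=0$), with no change to the architecture or neuron counts. Nothing further is needed.
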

\begin{corollary}\label{cor:lfe_weak_rep}
${\rm FNN}(2H_\T^i+H_\T^b,N_\T+1)$ weakly represents the set of linear finite element functions $\mathcal{V}_\T^1$ on $\T$ when $\T$ is a simplex mesh for finite element methods.
\end{corollary}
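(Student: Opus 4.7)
The plan is as follows. Given $v \in \mathcal V_\T$ with $v|_{\mathring\tau}(x) = L_\tau(x) := a_\tau x + c_\tau$ and any $\epsilon > 0$, I will construct an explicit $f_{NN} \in \mathrm{FNN}(2H_\T^i + H_\T^b, N_\T + 1)$ that coincides with $v$ on $\Omega_\T^\epsilon$ and satisfies the norm bound. Fix an affine defining function $\ell_k$ for each hyperplane $h_k$ of $\T$, and for each polytope $\tau$ and each face $k \in K_\tau$ let $s_{k,\tau} \in \{\pm 1\}$ be the sign with $s_{k,\tau}\ell_k \le 0$ on $\tau$. Take $\delta \in (0, \epsilon)$, set $R := \|v\|_{L^\infty(\Omega)}$, and choose a large constant $M$ to be determined.

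In the first hidden layer I place $\sigma(\ell_k + \delta)$ and $\sigma(-\ell_k + \delta)$ for every interior hyperplane, and only $\sigma(-\ell_k + \delta)$ for every boundary hyperplane (with $\ell_k$ oriented so $\ell_k \le 0$ on $\Omega$); this realizes exactly $2H_\T^i + H_\T^b$ neurons. These outputs serve two purposes. Role (i): on the $\delta$-interior of any polytope $\tau$, for each $k$ exactly one of the available shifted ReLUs is nonzero and equals $\pm\ell_k + \delta$, so a fixed affine combination plus a constant reproduces any prescribed affine function of $x$---in particular $L_\tau(x) + R$---because the normals $\{\nabla\ell_k\}$ span $\R^n$ by boundedness of $\Omega$. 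Role (ii): the "face detector"
\[
\phi_\tau(x) \;:=\; 1 - \frac{1}{\delta}\sum_{k \in K_\tau^{\mathrm{int}}} \sigma\bigl(s_{k,\tau}\ell_k(x) + \delta\bigr)
\]
is identically $1$ on the $\delta$-interior of $\tau$, vanishes on any interior face of $\tau$, and satisfies $\phi_\tau(x) \le -\epsilon/\delta$ whenever $x$ lies in the $\epsilon$-interior of an adjacent polytope.

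In the second hidden layer, for each $\tau \in \T$ I place one neuron
\[
u_\tau(x) := \sigma\bigl(L_\tau(x) + R + M(\phi_\tau(x) - 1)\bigr),
\]
whose pre-activation is assembled by superimposing the affine combination from Role (i) (which yields $L_\tau(x) + R$) with $-M/\delta$ times the face detectors from Role (ii) (which yields $M\phi_\tau(x) - M$). I also add one "bias" neuron outputting the constant $R$, for a total of $N_\T + 1$ neurons, and define $f_{NN}(x) := \sum_\tau u_\tau(x) - R$. Taking $M$ larger than the global sup on $\Omega$ of the piecewise-affine extension realizing Role (i), one verifies: for $x$ in the $\epsilon$-interior of some $\tau^*$, $\phi_{\tau^*}(x) = 1$ gives $u_{\tau^*}(x) = L_{\tau^*}(x) + R \ge 0$ (using $|L_{\tau^*}(x)| \le R$ on $\tau^*$), while $\phi_\tau(x) \le -\epsilon/\delta$ for every $\tau \ne \tau^*$ forces $u_\tau(x) = 0$; hence $f_{NN}(x) = L_{\tau^*}(x) = v(x)$. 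For $x$ in the boundary strip $\Omega \setminus \Omega_\T^\epsilon$, the same mechanism leaves at most one $u_\tau$ nonzero---the one for the polytope containing $x$---with $u_\tau(x) \in [0, L_\tau(x) + R] \subseteq [0, 2R]$, so $f_{NN}(x) \in [-R, L_\tau(x)] \subseteq [-R, R]$ and $\|f_{NN}\|_{L^\infty(\Omega)} \le R$ as required.

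The delicate step is the double-duty engineering of the first layer: the same $2H_\T^i + H_\T^b$ outputs must simultaneously serve as linear "coordinate lookups" that reconstruct the affine piece $L_\tau(x)$ inside each second-layer pre-activation, and as "face detectors" driving $\phi_\tau$, all while using only one side of each boundary hyperplane. Once this algebraic bookkeeping is pinned down, the remaining quantitative task is to pick $\delta < \epsilon$ and $M$ uniformly large enough that, across $\Omega$ and in particular near mesh vertices where several $\phi_\tau$ drop simultaneously, the inactive neurons truly vanish and the bound $R$ is never exceeded.
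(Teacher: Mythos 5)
Your argument re-proves the full Theorem~\ref{thm:cl_weak_rep} (the corollary itself only needs $\mathcal{V}_\T^1\subset\mathcal{V}_\T$ for simplex meshes), and it does so by a genuinely different route: the paper builds one compactly supported bump $\phi^\tau=\sigma(w_{\rm II}^\tau\sigma(W_{\rm I}^\tau x+b_{\rm I}^\tau)+b_{\rm II}^\tau)$ per element whose second-layer weights $-(\mu_i+t\lambda_i)$ are all negative, using Lemma~\ref{lem:positive} to crank $t$ up without disturbing the affine reconstruction on $\tau$, so that the pre-activation is $\leq 0$ on all of $\R^n\setminus\tau$ (boundary facets included). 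You instead split each interior hyperplane's two neurons into an ``affine lookup'' half and a ``face detector'' half, avoiding Lemma~\ref{lem:positive} entirely; the neuron count, the identity $\phi_\tau\equiv 1$ on the $\delta$-interior, and the final bookkeeping all check out on convex domains. Two small inaccuracies: the spanning of $\R^n$ comes from boundedness and full dimensionality of $\tau$, not of $\Omega$; and the claim $\phi_\tau(x)\leq-\epsilon/\delta$ for \emph{every} $\tau\neq\tau^*$ is false when a distant facet hyperplane, extended, passes arbitrarily close to $x$ --- but $1-\phi_\tau(x)>1$ together with $M\geq\sup_\Omega g_\tau$ still forces $u_\tau(x)=0$, so that overclaim is harmless.

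The genuine gap: your detector $\phi_\tau$ monitors only the \emph{interior} facets of $\tau$, because the single neuron allotted to each boundary hyperplane is consumed by the lookup role. The paper explicitly allows $\Omega$ to be non-convex, and then a boundary facet's hyperplane need not support $\Omega$: there can exist $x\in\Omega\setminus\tau$ satisfying every interior-facet inequality of $\tau$ with margin exceeding $\delta$, separated from $\tau$ only by a boundary facet. At such $x$ you get $\phi_\tau(x)=1$ and $u_\tau(x)=\sigma(g_\tau(x))$, which is in general strictly positive, so $u_\tau$ contaminates the sum at points deep inside a different element, and both $f|_{\Omega_\T^\epsilon}=v|_{\Omega_\T^\epsilon}$ and the bound $\norm{f}_{L^\infty(\Omega)}\leq R$ can fail. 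Concretely, take the U-shaped domain $\Omega=([0,3]\times[0,1])\cup([0,1]\times[1,3])\cup([2,3]\times[1,3])$ with the three-rectangle mesh (or any triangulation refining it): the left arm $\tau=[0,1]\times[1,3]$ has the single interior facet on $y=1$, so its detector is blind to the whole right arm. Your proof is complete for convex $\Omega$, where every boundary hyperplane supports $\Omega$ and hence every $x\in\Omega\setminus\tau$ strictly violates an interior facet of $\tau$; to cover the stated generality you must either let the boundary-facet neurons do double duty --- which is precisely what the paper's Lemma-\ref{lem:positive}-based construction with uniformly signed second-layer weights achieves --- or exploit the remaining freedom in the lookup coefficients to force $g_\tau\leq 0$ beyond the boundary facets, neither of which your write-up does.
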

\begin{corollary}\label{cor:estimate}
$\overline{{\rm FNN}(2H_\T^i+H_\T^b,N_\T+1)}_{L^p(\Omega
)}\supset \mathcal{V}_\T$ for $1\leq p<\infty$.
\end{corollary}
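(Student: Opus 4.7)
The plan is to derive Corollary \ref{cor:estimate} as a direct consequence of Theorem \ref{thm:cl_weak_rep} together with an elementary measure-theoretic fact about the mesh skeleton. Fix any $v\in\mathcal{V}_\T$ and any $p\in[1,\infty)$. Since $\Omega$ is a closed polytope and $\T$ has finitely many elements on each of which $v$ is affine, $v$ is essentially bounded; set $M:=\norm{v}_{L^\infty(\Omega)}$. For each $\epsilon>0$, I would invoke Theorem \ref{thm:cl_weak_rep} to pick $f_\epsilon\in{\rm FNN}(2H_\T^i+H_\T^b,N_\T+1)$ satisfying $f_\epsilon|_{\Omega_\T^\epsilon}=v|_{\Omega_\T^\epsilon}$ and $\norm{f_\epsilon}_{L^\infty(\Omega)}\leq M$.

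Next I would split the $L^p$ integral along $\Omega_\T^\epsilon$ and its complement. On $\Omega_\T^\epsilon$ the integrand vanishes; on $\Omega\setminus\Omega_\T^\epsilon$ the pointwise bound $|f_\epsilon-v|\leq 2M$ gives
\begin{equation*}
\norm{f_\epsilon-v}_{L^p(\Omega)}^p \;\leq\; (2M)^p\,\bigl|\Omega\setminus\Omega_\T^\epsilon\bigr|.
\end{equation*}
Thus it suffices to show $|\Omega\setminus\Omega_\T^\epsilon|\to 0$ as $\epsilon\to 0^+$. As $\epsilon$ decreases the sets $\Omega_\T^\epsilon$ are nondecreasing, so $\Omega\setminus\Omega_\T^\epsilon$ is nonincreasing, and
\begin{equation*}
\bigcap_{\epsilon>0}\bigl(\Omega\setminus\Omega_\T^\epsilon\bigr) \;=\; \Omega\setminus\bigcup_{\tau\in\T}\mathring{\tau} \;\subset\; \bigcup_{\tau\in\T}\partial\tau,
\end{equation*}
which is a finite union of subsets of $(n-1)$-dimensional hyperplanes and therefore has $n$-dimensional Lebesgue measure zero. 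Downward continuity of Lebesgue measure (valid because $\Omega$ has finite measure) then yields $|\Omega\setminus\Omega_\T^\epsilon|\to 0$, so $\norm{f_\epsilon-v}_{L^p(\Omega)}\to 0$ and $v$ lies in the $L^p$ closure of ${\rm FNN}(2H_\T^i+H_\T^b,N_\T+1)$.

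There is no real obstacle here: the whole point of having defined weak representation with the uniform bound $\norm{f}_{L^\infty(\Omega)}\leq\norm{v}_{L^\infty(\Omega)}$ was to make this passage trivial. The only subtlety worth flagging is why the auxiliary set $\Omega\setminus\Omega_\T^\epsilon$ shrinks to a null set, and that is resolved by the observation above that its limit is contained in the $(n-1)$-dimensional mesh skeleton. Accordingly the proof is just the two-line estimate followed by this measure-shrinkage argument.
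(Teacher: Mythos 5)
Your proof is correct and is exactly the argument the paper intends (the paper leaves this corollary unproved as an immediate consequence of Theorem \ref{thm:cl_weak_rep}): take $\epsilon\to 0^+$, use the uniform bound $\norm{f_\epsilon}_{L^\infty(\Omega)}\leq\norm{v}_{L^\infty(\Omega)}$ on $\Omega\setminus\Omega_\T^\epsilon$, and note that this exceptional set shrinks to a subset of the mesh skeleton, which is Lebesgue-null. Nothing further is needed.
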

Corollary \ref{cor:estimate} shows that the constant/linear finite element functions on $\T$ can be approximated by two-hidden-layer neural networks with neurons $(2H_\T^i+H_\T^b,N_\T+1)$ in arbitrary accuracy under $L^p$ norm. Note that Theorem \ref{thm:cl_weak_rep} and Corollary \ref{cor:cfe_weak_rep}-\ref{cor:estimate} also hold for ${\rm FNN}(2H_\T,N_\T+1)$ due to $2H_\T^i+H_\T^b<2H_\T$. 

The concept of weak representation and corresponding theorem and corollaries are established mainly for two reasons. The first reason is that we expect to connect finite element functions with ReLU NNs whose number of hidden layers has a constant upper bound, while a strict representation requires $\lceil\log_2(n + 1)\rceil$ hidden layers depending on the dimension $n$, based on current progress. The second reason is that ReLU NNs are unable to strictly represent discontinuous piecewise linear/constant functions, especially the constant finite element functions which are significant for finite element methods. Therefore a weaker setting is needed for establishing the connection. In the following we adopt a constructive proof, where we construct an appropriate compactly supported two-hidden-layer NN on each polytope unit and sum all the basis functions via the output layer. Such construction naturally motivates the above definition of weak representation.

We next show the proof in detail.

\begin{lemma}\label{lem:positive}
Let $K=\{x\in \mathbb{R}^{n}:h_{i}(x):=w_{i}x+b_{i}\geq0,1\leq i\leq m\}$ be a convex $m$-polytope ($m\geq n+1$) in $\mathbb{R}^{n}$, then there exist $\lambda_{i}>0$, $1\leq i\leq m$, such that
\begin{equation}
\sum_{i=1}^m\lambda_{i}w_i=0.
\end{equation}
\end{lemma}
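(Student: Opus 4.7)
The plan is to apply Farkas' lemma, exploiting the fact that a convex polytope $K$ in $\R^n$ is bounded and therefore has trivial recession cone $\{d\in\R^n:w_id\geq 0,\ \forall i\}=\{0\}$. I will proceed index by index: for each $j\in\{1,\ldots,m\}$ I construct nonnegative coefficients $(\mu_i^{(j)})_{i=1}^m$ with $\mu_j^{(j)}>0$ and $\sum_{i=1}^m\mu_i^{(j)}w_i=0$. Summing these $m$ combinations yields $\lambda_i:=\sum_{j=1}^m\mu_i^{(j)}$, which is strictly positive for every $i$ (the $j=i$ summand alone contributes $\mu_i^{(i)}>0$) and satisfies $\sum_i\lambda_iw_i=0$, giving the desired conclusion.

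To construct $\mu^{(j)}$ it suffices to show that $-w_j$ lies in the conic hull of $\{w_i:i\ne j\}$. By Farkas' lemma this is equivalent to the nonexistence of some $x\in\R^n$ with $w_i\cdot x\geq 0$ for all $i\ne j$ and $w_j\cdot x>0$. If such an $x$ existed, then $w_i\cdot x\geq 0$ would hold for every index $i$ including $j$, so $x$ would be a recession direction of $K$; boundedness of $K$ then forces $x=0$, contradicting $w_j\cdot x>0$. Hence Farkas provides nonnegative $(\mu_i^{(j)})_{i\ne j}$ with $\sum_{i\ne j}\mu_i^{(j)}w_i=-w_j$, and setting $\mu_j^{(j)}:=1$ completes the construction.

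I do not anticipate a serious obstacle. The only points requiring care are the boundedness of $K$ (built into the ``$m$-polytope'' terminology, and consistent with the hypothesis $m\geq n+1$ since fewer than $n+1$ half-spaces cannot enclose a bounded region in $\R^n$) and the correct invocation of Farkas' alternative. The strict positivity of the final $\lambda$ then falls out immediately from the index-by-index construction.
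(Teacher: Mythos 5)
Your proof is correct, but it takes a genuinely different route from the paper's. You argue by duality: boundedness of the (nonempty) polytope $K$ forces its recession cone $\{x:w_ix\geq 0,\ 1\leq i\leq m\}$ to be trivial, so for each fixed $j$ Farkas' lemma yields nonnegative coefficients expressing $-w_j$ as a conic combination of the remaining normals; summing the $m$ resulting relations $\sum_i\mu_i^{(j)}w_i=0$ (each with $\mu_j^{(j)}=1$) produces strictly positive $\lambda_i$. The index-by-index summation to upgrade ``nonnegative with one strict'' to ``all strict'' is the right trick and is airtight. The paper instead gives a self-contained geometric argument: it fixes $y\in\mathring{K}$, projects it onto the $m$ facet hyperplanes to get points $y_i$ with $y-y_i=a_iw_i^T$, $a_i>0$, shows $y$ lies in the interior of the convex hull of $\{y_1,\dots,y_m\}$ (via the observation that every hyperplane through $y$ separates some pair of projections, again using boundedness of $K$), and then reads off the positive coefficients from generalized barycentric coordinates. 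Your version is shorter and leans on a standard black box (Farkas), which buys concision at the cost of self-containedness; the paper's version is longer but elementary, which is deliberate since the author flags it as ``an elementary proof'' of a known polyhedral fact. One small point worth making explicit in your write-up: the recession-cone step uses that $K$ is nonempty (so that a nonzero recession direction actually produces an unbounded ray inside $K$); this is of course part of what ``convex $m$-polytope'' means here, but it is the hypothesis doing the work.
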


\begin{proof}
This lemma is a fundamental result in polyhedral theory, and readers familiar with it may skip the following elementary proof.

Choose a fixed $y\in \mathring{K}$, and denote by $y_i$ the projection points of $y$ on the hyperplanes $h_i(x)=0$, then $y-y_{i}=a_{i}w_{i}^{T}$, $a_{i}>0$. We assert that there exist $i\neq j$ such that $h(y_{i})\cdot h(y_{j})<0$, given any hyperplane $h(x)=wx-wy=0$ passing through $y$. Otherwise without loss of generality we assume that $h(y_{i})\geq0$ for all $1\leq i\leq m$, then $h(y_{i})=w(y_{i}-y)=-a_{i}ww_{i}^{T}\geq0$, therefore $h_{i}(y-tw^{T})=w_{i}y+b_{i}-tw_{i}w^{T}\geq 0$ holds for all $1\leq i\leq m$ and $t>0$. As a result $y-tw^{T}\in K$ for all $t>0$, this is contradictory with the fact that $K$ is bounded. This assertion points out that $y$ is contained in the convex hull $G$ generated by $\{y_1,...,y_m\}$, furthermore $y\in \mathring{G}$.

Assume that $y_1,...,y_{k}$ are the $k$ vertices of $G$, $n+1\leq k\leq m$. We first choose $\eta_{k+1},...,\eta_{m}>0$ small enough such that
\begin{equation}
    y':=y-\eta_{k+1}(y_{k+1}-y)-\cdots-\eta_{m}(y_m-y)\in \mathring{G}.
\end{equation}
As $G$ is a convex polytope and $y'\in \mathring{G}$, we can find $0<\eta_1,...,\eta_k<1$ satisfying $\eta_1+\cdots+\eta_k=1$ such that
\begin{equation}
y'=\eta_1y_1+\eta_2y_2+\cdots+\eta_ky_k,
\end{equation}
where $\eta_1,...,\eta_k$ are in fact the positive barycentric coordinates and the existence is given by \cite{lee1990some} and mentioned in \cite{floater2015generalized}. Subsequently, we have
\begin{equation}\label{eq:barycentric}
y=\eta_1y_1+\eta_2y_2+\cdots+\eta_ky_k+\eta_{k+1}(y_{k+1}-y)+\cdots+\eta_m(y_m-y),
\end{equation}
where
\begin{equation}
0<\eta_1,\eta_2,...,\eta_m<1,\quad\eta_1+\eta_2+\cdots+\eta_k=1.
\end{equation}
Eq. \eqref{eq:barycentric} is equivalent to
\begin{equation}
\eta_1a_1w_1+\eta_2a_2w_2+\cdots+\eta_ma_mw_m=0,
\end{equation}
thus $\lambda_i:=\eta_ia_i>0$ ($1\leq i\leq m$) are what we need.
\end{proof}
After Lemma \ref{lem:positive}, the proof of Theorem \ref{thm:cl_weak_rep} can be given. In this paper, where the 2-norm of vectors appears frequently, we simplify the notation by replacing $\norm{\cdot}_2$ with $|\cdot|$, provided that no ambiguity arises.

\textbf{Proof of Theorem \ref{thm:cl_weak_rep}.}
Assume that $v\in\mathcal{V}_\T$ is a piecewise linear function, $\tau\in\T$ is a convex polytope, and denote
\begin{equation}
\tau^\epsilon:=\{x\in\tau:d(x,\partial\tau)\geq\epsilon\}.
\end{equation}
Since the convex polytope is surrounded by several hyperplanes, we denote
\begin{equation} 
\begin{cases}
\tau=\{x\in\R^n:h_i(x):=w_ix+b_i\geq 0,1\leq i\leq m\}\\
\tau^\epsilon=\{x\in\R^n:h_i^\epsilon(x):=w_i x+b_i-\epsilon|w_i|\geq 0,1\leq i\leq m\}
\end{cases},
\quad m\geq n+1,
\end{equation}
Assume that $v|_{\mathring{\tau}}(x)=ax+c$, $a\in\R^{1\times n}$, $c\in\R$. Consider the linear system
\begin{equation}\label{eq:linear_system}
	(w_1^T,w_2^T,...,w_m^T)\cdot \mu = -a^T,\quad \mu=(\mu_1,\mu_2,...,\mu_m)^T\in\R^m.
\end{equation}
Since $\tau$ is an $n$-dimensional convex polytope, $(w_1^T,w_2^T,...,w_m^T)$ is a full rank $n$-by-$m$ matrix, hence there exists a solution $\mu$ for system \eqref{eq:linear_system}. By Lemma \ref{lem:positive} we know there exist $\lambda_1,...,\lambda_m>0$ such that the vector
\begin{equation}
(\mu_1+t\lambda_1,\mu_2+t\lambda_2,...,\mu_m+t\lambda_m)^T
\end{equation}
solves system \eqref{eq:linear_system} for all $t\in\R$. In the following discussion we consider the $t$ large enough so that $\mu_i+t\lambda_i>0$ for $1\leq i\leq m$. Denote 
\begin{equation}
\begin{cases}
W_{\rm I}^{\tau}:=(w_1^T,w_2^T,...,w_m^T)^T\in\R^{m\times n},\\
b_{\rm I}^{\tau}:=(b_1-\epsilon|w_1|,b_2-\epsilon|w_2|,...,b_m-\epsilon|w_m|)^T\in\R^m,
\end{cases}
\end{equation}
and
\begin{equation}
\begin{cases}
\hat{w}_{\rm II}^\tau(t):=-(\mu_1+t\lambda_1,\mu_2+t\lambda_2,...,\mu_m+t\lambda_m)\in\R^{1\times m},\\
\hat{b}_{\rm II}^\tau(t):=\sum_{i=1}^m(\mu_i+t\lambda_i)(b_i-\epsilon|w_i|)+c+R\in\R,
\end{cases}
\end{equation}
where $R:=\norm{v}_{L^\infty(\Omega)}$ . Now consider
\begin{equation}
f_t^\tau(x):=\sigma(\hat{w}_{\rm II}^\tau(t)\sigma(W_{\rm I}^{\tau}x+b_{\rm I}^{\tau})+\hat{b}_{\rm II}^\tau(t)),
\end{equation}
and we investigate the properties of $f_t^\tau$. Firstly, we can check that
\begin{equation}
\begin{split}
f_t^\tau|_{\tau^\epsilon}(x)=&\sigma(\hat{w}_{\rm II}^\tau(t)(W_{\rm I}^{\tau}x+b_{\rm I}^{\tau})+\hat{b}_{\rm II}^\tau(t)) \\
=&\sigma(ax+c+R) \\
=&v|_{\tau^\epsilon}(x)+R,\quad \forall x\in\tau^\epsilon.
\end{split}
\end{equation}
For $x\in\tau\backslash\tau^\epsilon$, we have
\begin{equation}
\hat{w}_{\rm II}^\tau(t)\sigma(W_{\rm I}^{\tau}x+b_{\rm I}^{\tau})+\hat{b}_{\rm II}^\tau(t)\leq\hat{w}_{\rm II}^\tau(t)(W_{\rm I}^{\tau}x+b_{\rm I}^{\tau})+\hat{b}_{\rm II}^\tau(t)=ax+c+R\leq 2R,
\end{equation}
due to $\mu_i+t\lambda_i>0$ ($1\leq i\leq m$) and $\sigma(y)\geq y\ \forall y\in\R$. Therefore
\begin{equation}
0\leq f_t^\tau(x)=\sigma(\hat{w}_{\rm II}^\tau(t)\sigma(W_{\rm I}^{\tau}x+b_{\rm I}^{\tau})+\hat{b}_{\rm II}^\tau(t))\leq 2R,\quad \forall x\in\tau\backslash\tau^\epsilon.
\end{equation}
Moreover, if $x\in\R^n\backslash\tau$, we temporarily fix $x$ and consider the index sets
\begin{equation}
\begin{cases}
    I_{+}:=\{1\leq i\leq m:h_i(x)=w_ix+b_i\geq 0\},\\
    I_{-}:=\{1\leq i\leq m:h_i(x)=w_ix+b_i< 0\},
\end{cases}
\end{equation}
then $I_{-}$ is nonempty, and
\begin{equation}\label{eq:prf_1}
\begin{split}
&\hat{w}_{\rm II}^\tau(t)\sigma(W_{\rm I}^{\tau}x+b_{\rm I}^{\tau})+\hat{b}_{\rm II}^\tau(t)\\=&-\sum_{i=1}^m(\mu_i+t\lambda_i)\sigma(w_ix+b_i-\epsilon|w_i|)+\sum_{i=1}^m(\mu_i+t\lambda_i)(b_i-\epsilon|w_i|)+c+R \\
=&-\sum_{i\in I_{+}}(\mu_i+t\lambda_i)\sigma(w_ix+b_i-\epsilon|w_i|)-\sum_{i\in I_{-}}(\mu_i+t\lambda_i)\sigma(w_ix+b_i-\epsilon|w_i|)\\&+\sum_{i=1}^m(\mu_i+t\lambda_i)(b_i-\epsilon|w_i|)+c+R\\
=&-\sum_{i\in I_{+}}(\mu_i+t\lambda_i)\sigma(w_ix+b_i-\epsilon|w_i|)+\sum_{i=1}^m(\mu_i+t\lambda_i)(b_i-\epsilon|w_i|)+c+R,
\end{split}
\end{equation}
since $\sigma(w_ix+b_i-\epsilon|w_i|)=0$ for $i\in I_{-}$. Next, we derive that
\begin{equation}\label{eq:prf_2}
\begin{split}
    &-\sum_{i\in I_{+}}(\mu_i+t\lambda_i)\sigma(w_ix+b_i-\epsilon|w_i|) \\
    \leq&-\sum_{i\in I_{+}}(\mu_i+t\lambda_i)(w_ix+b_i-\epsilon|w_i|) \\
    =&\sum_{i\in I_{-}}(\mu_i+t\lambda_i)(w_ix+b_i-\epsilon|w_i|)-\sum_{i=1}^m(\mu_i+t\lambda_i)(w_ix+b_i-\epsilon|w_i|).
\end{split}
\end{equation}
By combining \eqref{eq:prf_1} and \eqref{eq:prf_2}, we have
\begin{equation}\label{eq:prf_3}
\begin{split}
&\hat{w}_{\rm II}^\tau(t)\sigma(W_{\rm I}^{\tau}x+b_{\rm I}^{\tau})+\hat{b}_{\rm II}^\tau(t) \\
\leq&\sum_{i\in I_{-}}(\mu_i+t\lambda_i)(w_ix+b_i-\epsilon|w_i|)-\sum_{i=1}^m(\mu_i+t\lambda_i)(w_ix+b_i-\epsilon|w_i|)\\&+\sum_{i=1}^m(\mu_i+t\lambda_i)(b_i-\epsilon|w_i|)+c+R\\
=&\sum_{i\in I_{-}}(\mu_i+t\lambda_i)(w_ix+b_i-\epsilon|w_i|)-\sum_{i=1}^m(\mu_i+t\lambda_i)(w_ix)+c+R.
\end{split}
\end{equation}
We expect to find a suitable $t$ such that $\hat{w}_{\rm II}^\tau(t)\sigma(W_{\rm I}^{\tau}x+b_{\rm I}^{\tau})+\hat{b}_{\rm II}^\tau(t)\leq 0$. To this end, we first introduce a constant
\begin{equation}
s:=\max_{1\leq i\leq m}\left|\frac{\mu_i}{\lambda_i}\right|+1,
\end{equation}
which makes $\mu_i+s\lambda_i$ positive for all $1\leq i\leq m$, then we can rewrite \eqref{eq:prf_3} as
\begin{equation}
\begin{split}
&\hat{w}_{\rm II}^\tau(t)\sigma(W_{\rm I}^{\tau}x+b_{\rm I}^{\tau})+\hat{b}_{\rm II}^\tau(t) \\
\leq&\sum_{i\in I_{-}}(\mu_i+t\lambda_i)(w_ix+b_i-\epsilon|w_i|)-\sum_{i=1}^m(\mu_i+t\lambda_i)(w_ix)+c+R \\
=&\sum_{i\in I_{-}}(\mu_i+t\lambda_i)(w_ix+b_i-\epsilon|w_i|)-\sum_{i=1}^m(\mu_i+s\lambda_i)(w_ix+b_i-b_i)+c+R\\
=&\sum_{i\in I_{-}}(\mu_i+t\lambda_i)(w_ix+b_i-\epsilon|w_i|)-\sum_{i=1}^m(\mu_i+s\lambda_i)(w_ix+b_i)\\
&+\sum_{i=1}^m(\mu_i+s\lambda_i)b_i+c+R\\
=&-\sum_{i\in I_{-}}(\mu_i+t\lambda_i)\epsilon|w_i|+\sum_{i\in I_{-}}(t-s)\lambda_i(w_ix+b_i)-\sum_{i\in I_{+}}(\mu_i+s\lambda_i)(w_ix+b_i)\\
&+\sum_{i=1}^m(\mu_i+s\lambda_i)b_i+c+R,
\end{split}
\end{equation}
where the first equality is due to $\sum_{i=1}^m(\mu_i+t\lambda_i)w_i=\sum_{i=1}^m(\mu_i+s\lambda_i)w_i=-a$ according to the definitions of $\mu_i$ and $\lambda_i$. It is sufficient to find a $t$ such that:
\begin{itemize}
    \item $-\sum_{i\in I_{+}}(\mu_i+s\lambda_i)(w_ix+b_i)\leq 0$.
    \item $+\sum_{i\in I_{-}}(t-s)\lambda_i(w_ix+b_i)\leq 0$.
    \item $-\sum_{i\in I_{-}}(\mu_i+t\lambda_i)\epsilon|w_i|+\sum_{i=1}^m(\mu_i+s\lambda_i)b_i+c+R\leq 0$.
\end{itemize}
Note that the first inequality naturally holds since $\mu_i+s\lambda_i\geq0$ and $w_ix+b_i\geq 0$ for $i\in I_{+}$. The second and third inequalities are in fact both affine functions on $t$, with the linear coefficients $\sum_{i\in I_{-}}\lambda_i(w_ix+b_i)$ and $-\sum_{i\in I_{-}}\lambda_i\epsilon|w_i|$, which are both negative, as $I_{-}$ is a nonempty set. Consequently, we can always find a sufficiently large $t$ such that the above inequalities holds, for example,
\begin{equation}
t_0:=\max\left(\frac{|\sum_{i=1}^m(\mu_i+s\lambda_i)b_i+c+R|+\sum_{i=1}^m\epsilon|\mu_i||w_i|}{\min_{1\leq i\leq m}\epsilon\lambda_i|w_i|}, s+1\right).
\end{equation}
Now we have $\hat{w}_{\rm II}^\tau(t_0)\sigma(W_{\rm I}^{\tau}x+b_{\rm I}^{\tau})+\hat{b}_{\rm II}^\tau(t_0)\leq 0$ for the fixed $x\in\R^n\backslash\tau$. Furthermore, it is noticed that the definition of $t_0$ is independent of $x$, thus we can assert that
\begin{equation}
\hat{w}_{\rm II}^\tau(t_0)\sigma(W_{\rm I}^{\tau}x+b_{\rm I}^{\tau})+\hat{b}_{\rm II}^\tau(t_0)\leq 0,\quad \forall x\in\R^n\backslash\tau,
\end{equation}
subsequently we derive that
\begin{equation}
f_{t_0}^\tau(x)=\sigma(\hat{w}_{\rm II}^\tau(t_0)\sigma(W_{\rm I}^{\tau}x+b_{\rm I}^{\tau})+\hat{b}_{\rm II}^\tau(t_0))=0,\quad \forall x\in\R^n\backslash\tau.
\end{equation}
We simplify the notations by
\begin{equation}\label{eq:phi_tau}
\phi^\tau(x):=\sigma(w_{\rm II}^\tau\sigma(W_{\rm I}^{\tau}x+b_{\rm I}^{\tau})+b_{\rm II}^\tau),\quad w_{\rm II}^\tau:=\hat{w}_{\rm II}^\tau(t_0),\ b_{\rm II}^\tau:=\hat{b}_{\rm II}^\tau(t_0),
\end{equation}
and summarize the above results as
\begin{equation}\label{eq:phi_tau_pro}
\begin{cases}
\phi^\tau(x)=v|_{\tau^\epsilon}(x)+R,\quad &x\in\tau^\epsilon,\\
0\leq \phi^\tau(x)\leq 2R,\quad&x\in\tau\backslash\tau^\epsilon,\\
\phi^\tau(x)=0,\quad &x\in\R^n\backslash\tau.
\end{cases}
\end{equation}

Finally, we are able to construct the expected FNN. Denote
\begin{equation}\label{eq:W1B1}
W_{\rm I}:=\begin{pmatrix}
W_{\rm I}^{\tau_1} \\
W_{\rm I}^{\tau_2} \\
\vdots \\
W_{\rm I}^{\tau_{N_{\mathcal{T}}}} \\
\end{pmatrix},\quad B_{\rm I}:=
\begin{pmatrix}
b_{\rm I}^{\tau_1} \\
b_{\rm I}^{\tau_2} \\
\vdots \\
b_{\rm I}^{\tau_{N_{\mathcal{T}}}} \\
\end{pmatrix},
\end{equation}
as well as
\begin{equation}\label{eq:W2B2}
 W_{\rm II}:={\rm diag}\left(w_{\rm II}^{\tau_1},w_{\rm II}^{\tau_2},...,w_{\rm II}^{\tau_{N_{\mathcal{T}}}},O\right),\quad B_{\rm II}:=\begin{pmatrix}
b_{\rm II}^{\tau_1} \\
b_{\rm II}^{\tau_2} \\
\vdots \\
b_{\rm II}^{\tau_{N_{\mathcal{T}}}} \\
R
\end{pmatrix},\quad w_{\rm III}:=\left(1,1,...,1,-1\right)_{1\times (N_{\mathcal{T}}+1)},
\end{equation}
where $O:=(0,0,...,0)$ whose dimension equals the number of rows of $W_{\rm I}^{\tau_{N_{\mathcal{T}}}}$, and ${\rm diag}(\cdot,...,\cdot)$ means the block diagonal matrix with given diagonal blocks. It is readily to check that
\begin{equation}\label{eq:fnn_constructed}
f(x):=w_{\rm III}\sigma(W_{\rm II}\sigma(W_{\rm I}x+B_{\rm I})+B_{\rm II})=-R+\sum_{i=1}^{N_{\mathcal{T}}}\phi^{\tau_i}(x)
\end{equation}
satisfies:
\begin{itemize}
\item $f|_{\Omega_\T^\epsilon}=v|_{\Omega_\T^\epsilon}$.
\item $\norm{f}_{L^\infty(\Omega)}\leq R=\norm{v}_{L^\infty(\Omega)}$.
\end{itemize}
The last issue is the neurons of $f$. The number of neurons of the second hidden layer is indeed $N_{\mathcal{T}}+1$. As for the first hidden layer, it is noticed that $W_{\rm I}^{\tau_i},b_{\rm I}^{\tau_i}$ are determined by the directed hyperplanes of $\T$. We can remove the duplicate directed hyperplanes and subsequently reduce the number of neurons to exactly $2H_\T^i+H_\T^b$, related to the number of all the directed hyperplanes involved in the mesh. Specifically, assume that there are two basis functions $\sigma(wx+b-\epsilon|w|)$ and $\sigma(\tilde{w}x+\tilde{b}-\epsilon|\tilde{w}|)$ related to a same directed hyperplane, which means
\begin{equation}
    \{x\in\R^n:wx+b\geq 0\}=\{x\in\R^n:\tilde{w}x+\tilde{b}\geq 0\},
\end{equation}
so that
\begin{equation}
    \tilde{w}=\lambda w,\quad \tilde{b}=\lambda b,
\end{equation}
for a constant $\lambda>0$. Then
\begin{equation}
    \sigma(\tilde{w}x+\tilde{b}-\epsilon|\tilde{w}|)=\sigma(\lambda(wx+b-\epsilon|w|))=\lambda\sigma(wx+b-\epsilon|w|),
\end{equation}
they are linear dependent and can be merged into one.
\hfill $\square$

\begin{remark}
Based on the proof, it is noticed that if we consider the two-hidden-layer FNNs with a bias at the output layer, then the number of neurons of the second hidden layer will be reduced by 1. In such a case, we can replace the ${\rm FNN}(2H_\T^i+H_\T^b,N_\T+1)$ in Theorem \ref{thm:cl_weak_rep} and Corollary \ref{cor:cfe_weak_rep}-\ref{cor:estimate} by ${\rm FNN_b}(2H_\T^i+H_\T^b,N_\T)$.
\end{remark}
\begin{remark}\label{rem:complexity}
Following the construction procedure, the worst-case computational complexity for constructing the weakly represented neural network is $\mathcal{O}(n^3MN_\T)+{\rm C}(n,M)\cdot N_\T$, where $M$ is the maximum number of facets among the polytopes ($M=n+1$ for simplex mesh), and ${\rm C}(n,M)$ is the complexity for computing $\lambda_1,...,\lambda_m$ in Lemma \ref{lem:positive} for $m=M$, which may involve a linear programming algorithm.
\end{remark}
\begin{remark}
Two hidden layers are necessary for weak representation when $n\geq 2$. Consider the set of all the non-differentiable points of a nonzero one-hidden-layer ReLU NN, it is indeed the union of finitely many hyperplanes. However, the set of non-differentiable points of a ReLU NN weakly representing a linear finite element function on simplex mesh with small $\epsilon$ does not equal the union of finitely many hyperplanes (restricted to $\Omega$), as long as the union of boundaries of the simplexes does not equal the union of a finite number of hyperplanes (restricted to $\Omega$). Such mesh exists when $n\geq 2$.
\end{remark}
\begin{remark}
In the previous literature \cite{arora2016understanding,he2018relu}, it has been established that continuous piecewise linear functions can be strictly represented by ReLU NNs with at most $\lceil\log_2(n + 1)\rceil$ hidden layers, and an estimate of the number of neurons of $\mathcal{O}(n2^{N_\T N_\T!})$ is further provided. \cite{chen2022improved} improves the number of neurons to $\mathcal{O}(N_\T^2)$ with $2\lceil\log_2(N_\T)\rceil+1$ hidden layers. In comparison, we show that (continuous or discontinuous) piecewise linear functions can be weakly represented by ReLU NNs with two hidden layers, and the number of neurons is accurately given by $2H_\T^i+H_\T^b + N_\T$. Note that $2H_\T^i+H_\T^b\leq M N_\T$ where $M$ is defined in Remark \ref{rem:complexity} ($M=n+1$ for simplex mesh).
\end{remark}
Through careful modification of technical details in the preceding theorem's demonstration, we derive the subsequent corollary.
\begin{corollary}
Any function in $L^p(\R^n)$ ($1\leq p<\infty$) can be arbitrarily well-approximated in the $L^p(\R^n)$ norm by a two-hidden-layer ReLU neural network. Furthermore, two hidden layers are necessary when $n\geq 2$.
\end{corollary}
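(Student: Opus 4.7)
Given $g \in L^p(\R^n)$ and $\delta>0$, density of compactly supported simple functions in $L^p(\R^n)$ yields a function $s=\sum_i c_i \mathbf{1}_{\tau_i}$, supported on a convex polytope mesh $\T$ of some bounded $\Omega$, with $\|g-s\|_{L^p(\R^n)}<\delta/2$. The construction in the proof of Theorem \ref{thm:cl_weak_rep} produces a network equal to $-R$ outside $\Omega$, which has infinite $L^p(\R^n)$ norm; the remedy is to run that construction twice per cell. For each $\tau\in\T$, build $\phi^\tau$ matching the linear piece $s|_\tau$ (so $\phi^\tau=s|_\tau+R$ on $\tau^\epsilon$ and $\phi^\tau\equiv 0$ off $\tau$) together with $\phi_0^\tau$ matching the zero linear piece at the same shift $R$ (so $\phi_0^\tau=R$ on $\tau^\epsilon$ and $\phi_0^\tau\equiv 0$ off $\tau$); both are delivered by the same machinery since the identity $\sum_i \lambda_i w_i = 0$ from Lemma \ref{lem:positive} still applies. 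Setting $f:=\sum_\tau(\phi^\tau-\phi_0^\tau)$ gives a two-hidden-layer ReLU network that equals $s$ on $\Omega_\T^\epsilon$, vanishes identically outside $\Omega$, and is pointwise bounded by $2R$ on the fat boundary $\Omega\setminus\Omega_\T^\epsilon$; the first hidden layer still uses the shared $2H_\T^i+H_\T^b$ hyperplane features, while the second has $2N_\T$ neurons. Since $|\Omega\setminus\Omega_\T^\epsilon|\to 0$ as $\epsilon\to 0^+$, one can choose $\epsilon$ small enough that $\|f-s\|_{L^p(\R^n)}<\delta/2$, and the triangle inequality closes the estimate.

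\textbf{Necessity for $n\geq 2$.} The plan is to show that the only one-hidden-layer ReLU network lying in $L^p(\R^n)$ is the zero function, which immediately precludes $L^p$-density. Let $g(x)=\sum_i c_i\sigma(w_i\cdot x+b_i)$ lie in $L^p(\R^n)$. On each top-dimensional cell of the hyperplane arrangement $\{w_i\cdot x+b_i=0\}$, $g$ is affine; on any unbounded cell $C$ with $g|_C(x)=a_C\cdot x+b_C$, finiteness of $\int_C|g|^p$ over an infinite-measure region forces $a_C=0$ and $b_C=0$ (either the integrand is a nonzero constant on a set of infinite measure, or it grows like $|x|$ on a region of infinite measure). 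Hence $g$ has compact support. Next, for any direction $v$ with $w_i\cdot v\neq 0$ for all $i$, the map $t\mapsto g(tv)$ is affine in $t$ for $t$ large and is zero there by compact support, so both its slope $(\sum_{i\in I(v)}c_iw_i)\cdot v$ and its constant $\sum_{i\in I(v)}c_ib_i$ vanish, where $I(v):=\{i:w_i\cdot v>0\}$. Since $I(v)$ is locally constant on the complement of the dual arrangement on $S^{n-1}$ and $v$ sweeps an $(n-1)$-dimensional open set on each region $U$, we obtain $\sum_{i\in I(U)}c_iw_i=0$ and $\sum_{i\in I(U)}c_ib_i=0$ for each region $U$. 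Comparing two adjacent regions separated by a single wall $\{v:w_{i_0}\cdot v=0\}$ yields $c_{i_0}w_{i_0}=0$, hence $c_{i_0}=0$; running over all $i$ forces $g\equiv 0$.

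\textbf{Main obstacle.} The sufficiency part is a routine modification of Theorem \ref{thm:cl_weak_rep}; the delicate step is the wall-crossing argument in the necessity. In the non-generic case, where several weights lie on a common line through the origin and a single wall of the dual arrangement on $S^{n-1}$ flips multiple neurons simultaneously, the clean equation $c_{i_0}w_{i_0}=0$ is replaced by a single scalar identity on the group of parallel weights, which does not separately force each coefficient to vanish. I plan to handle this by first regrouping $g=\sum_k F_k(\hat w_k\cdot x)$ over distinct weight lines, observing that each $F_k$ is a one-dimensional piecewise linear function, and exploiting $L^p(\R^n)$-integrability on far cylindrical slabs transverse to each $\hat w_k$ to force each $F_k$ to vanish separately, thereby recovering $g\equiv 0$ and completing the necessity.
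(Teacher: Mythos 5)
Your proof is essentially correct and reaches the same two conclusions as the paper, but by noticeably different routes in both halves. For sufficiency, the paper approximates by compactly supported \emph{linear} finite element functions and fixes the offset $-R$ outside $\Omega$ by building a single global plateau $\phi^\Omega$ (using the convexity of $\Omega$ and the constant function $R/2$) and replacing the last second-layer neuron by it, keeping the width at $N_\T+1$; you instead subtract a per-cell plateau $\phi_0^\tau$, which costs $2N_\T$ second-layer neurons but has the mild advantage of not requiring $\Omega$ to be convex. Both are valid, and since the corollary does not fix the width, your count is immaterial. For necessity, both arguments begin by forcing a one-hidden-layer network in $L^p(\R^n)$ to vanish on every unbounded cell of its arrangement (your tube argument for why a nonzero affine function on an infinite-measure polyhedral cell has infinite $L^p$ integral is the right one), hence to be compactly supported. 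From there the paper finishes in one line: the non-differentiable set of a one-hidden-layer ReLU network is a union of \emph{entire} hyperplanes (the gradient jump across a given hyperplane is constant along it), so it is either empty --- making the network affine, hence zero if compactly supported --- or unbounded, contradicting compact support for $n\geq 2$. Your wall-crossing argument on $S^{n-1}$ is a legitimate alternative, but as you yourself note it is incomplete in the non-generic case of parallel weights, where crossing a wall only yields one scalar relation for the whole group; this is a genuine gap in your write-up as it stands (your proposed regrouping into ridge profiles $F_k(\hat w_k\cdot x)$ would close it, but is more machinery than needed). Note also that you are proving more than necessary: you do not need all coefficients $c_i$ to vanish, only that the \emph{function} is zero, and the kink-along-hyperplanes observation delivers exactly that and sidesteps the parallel-weights issue entirely. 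I would recommend replacing the wall-crossing step by that observation.
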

\begin{proof}
As the family of compactly supported linear finite element functions are dense in $L^p(\R^n)$, we only need to show that the NN constructed in the proof of Theorem \ref{thm:cl_weak_rep} can be modified so that it is compactly supported in a convex $\Omega$. Recall that the NN in Eq. (\ref{eq:fnn_constructed}) satisfies:
\begin{itemize}
\item $f|_{\Omega_\T^\epsilon}=v|_{\Omega_\T^\epsilon}$.
\item $\norm{f}_{L^\infty(\Omega)}\leq R=\norm{v}_{L^\infty(\Omega)}$.
\item $f|_{\R^n\backslash\Omega}=-R$.
\end{itemize}
It suffices to adjust $f|_{\R^n\backslash\Omega}$ to zero. Firstly, we construct a function
\begin{equation}
\phi^\Omega(x):=\sigma(w_{\rm II}^\Omega\sigma(W_{\rm I}^{\Omega}x+b_{\rm I}^{\Omega})+b_{\rm II}^\Omega),
\end{equation}
which satisfies
\begin{equation}
\begin{cases}
\phi^\Omega(x)=R,\quad &x\in\Omega^\epsilon,\\
0\leq \phi^\Omega(x)\leq R,\quad&x\in\Omega\backslash\Omega^\epsilon,\\
\phi^\Omega(x)=0,\quad &x\in\R^n\backslash\Omega,
\end{cases}
\end{equation}
in the same manner as Eq. (\ref{eq:phi_tau}) and Eq. (\ref{eq:phi_tau_pro}), by considering the convex $\Omega$ and the constant function $\tilde{v}(x)\equiv R/2$ instead of $\tau$ and $v$. Secondly, we augment $W_{\rm I}$ and $B_{\rm I}$ in \eqref{eq:W1B1} with $W_{\rm I}^\Omega$ and $b_{\rm I}^\Omega$, and replace the last row of $W_{\rm II}$ and $B_{\rm II}$ in Eq. \eqref{eq:W2B2}, i.e. $O$ and $R$, by $w_{\rm II}^\Omega$ and $b_{\rm II}^\Omega$. Finally, we can check that the modified
\begin{equation}
f(x):=w_{\rm III}\sigma(W_{\rm II}\sigma(W_{\rm I}x+B_{\rm I})+B_{\rm II})=-\phi^\Omega(x)+\sum_{i=1}^{N_{\mathcal{T}}}\phi^{\tau_i}(x)
\end{equation}
satisfies:
\begin{itemize}
\item $f|_{\Omega_\T^\epsilon}=v|_{\Omega_\T^\epsilon}$.
\item $\norm{f}_{L^\infty(\Omega)}\leq 2R$.
\item $f|_{\R^n\backslash\Omega}=0$.
\end{itemize}
It is the compactly supported neural network we need.

At last we show that two hidden layers are necessary when $n\geq 2$. Since the set of all the non-differentiable points of a nonzero single-hidden-layer NN is the union of finitely many hyperplanes, we assert that such a NN does not have a compact support. Otherwise, the non-differentiable points are bounded, which contradicts the unboundedness of hyperplanes in $n\geq 2$ dimensional space. Next we consider the subdomains where the NN acts as a linear function. As the NN is not compactly supported, there exists at least one nonzero unbounded linear subdomain, on which the NN's $L^p$ norm is infinity. It tells that the single-hidden-layer NN is not a $L^p(\R^n)$ function.
\end{proof}
In fact single-hidden-layer NNs are sufficient to approximate $L^p(\R)$ functions in one dimensional space, due to the boundedness of one dimensional hyperplanes, which are exactly points.

The above results establish a bridge between two-hidden-layer ReLU FNNs and constant/linear finite element functions. The analysis of approximation capability of FNNs can be guided by the estimation of finite element functions via
\begin{equation}
\inf_{f\in{\rm FNN}(h_1,h_2)}\norm{f-v}_{L^p(\Omega)}\leq\inf_{u\in{\rm FE}(h_1,h_2)}\norm{u-v}_{L^p(\Omega)},\quad\forall v\in L^p(\Omega),
\end{equation}
where
\begin{equation}
{\rm FE}(h_1,h_2):=\bigcup_{2H_\T^i+H_\T^b\leq h_1,N_\T+1\leq h_2}\{{\rm finite\ element\ functions\ on\ mesh\ \T}\}.
\end{equation}
For instance, we derive an error estimate based on the standard simplex mesh. Below $W^{2,p}$ denotes the Sobolev space and $|\cdot|_{W^{2,p}}$ denotes the semi-norm.
\begin{corollary}
    Suppose that $v\in W^{2,p}(\Omega)$ with $\Omega=[0,1]^n$ and $1\leq p<\infty$, $N$ is a positive integer, then we have the following estimate
    \begin{equation}
        \inf_{f\in{\rm FNN}(2n^2N-n^2+n,N^n\cdot n!+1)}\norm{f-v}_{L^p(\Omega)}\leq |v|_{W^{2,p}(\Omega)}\cdot \mathcal{O}(N^{-2}).
    \end{equation}
\end{corollary}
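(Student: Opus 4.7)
The plan is to instantiate Corollary \ref{cor:estimate} on the standard Kuhn triangulation of $\Omega = [0,1]^n$ and combine it with the classical $L^p$ finite-element interpolation estimate. Fix $N$ and let $\T_N$ be the triangulation obtained by subdividing each axis into $N$ equal parts and then cutting each of the $N^n$ subcubes into $n!$ Kuhn simplices (one per permutation of coordinates). Immediately $N_{\T_N} = N^n \cdot n!$, which accounts for the second-layer width $N_{\T_N}+1$ in the claim.

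The first main step is to count the distinct hyperplanes of $\T_N$. They split into three families: the $2n$ boundary coordinate hyperplanes $\{x_i = 0\},\{x_i = 1\}$; the $n(N-1)$ interior coordinate hyperplanes $\{x_i = k/N\}$ with $1 \leq k \leq N-1$; and, for each unordered pair $\{i,j\}$, the diagonal hyperplanes $\{x_i - x_j = \ell/N\}$ for integers $\ell$ with $|\ell| \leq N-1$, giving $\binom{n}{2}(2N-1)$ interior diagonals in total. A short calculation then yields
\begin{equation*}
2H_{\T_N}^i + H_{\T_N}^b = 2n(N-1) + n(n-1)(2N-1) + 2n = 2n^2 N - n^2 + n,
\end{equation*}
which matches the first-layer width stated in the claim.

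The second main step is to invoke the standard linear finite-element approximation bound. The Kuhn family $\{\T_N\}_N$ is uniformly shape-regular with meshsize $h = 1/N$, so, e.g., Scott--Zhang interpolation (which is defined on all of $W^{2,p}$ without requiring pointwise values and whose image lies in the continuous linear finite-element space) produces $u_N \in \mathcal{V}_{\T_N}^1$ and a constant $C = C(n,p)$ independent of $N$ with $\norm{u_N - v}_{L^p(\Omega)} \leq C h^2 |v|_{W^{2,p}(\Omega)} = |v|_{W^{2,p}(\Omega)} \cdot \mathcal{O}(N^{-2})$.

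Finally, Corollary \ref{cor:estimate} applied to $\T_N$ gives $u_N \in \overline{{\rm FNN}(2n^2 N - n^2 + n,\, N^n n! + 1)}_{L^p(\Omega)}$, so the triangle inequality $\norm{f - v}_{L^p} \leq \norm{f - u_N}_{L^p} + \norm{u_N - v}_{L^p}$ followed by an infimum over $f$ yields the target bound. The only step requiring real care is the hyperplane bookkeeping, but it is mechanical once the Kuhn construction is unpacked; everything else is either already proved in the paper or a standard Bramble--Hilbert computation.
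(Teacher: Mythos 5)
Your proposal is correct and follows essentially the same route as the paper: the Kuhn/Freudenthal triangulation of $[0,1]^n$ with meshsize $1/N$, the hyperplane and simplex counts (your tally $2n(N-1)+n(n-1)(2N-1)+2n=2n^2N-n^2+n$ agrees with the paper's $H_\T^i=n^2N-\tfrac{n(n+1)}{2}$, $H_\T^b=2n$), the classical $O(h^2)$ linear finite-element estimate, and Corollary \ref{cor:estimate}. Your only refinement is invoking Scott--Zhang quasi-interpolation explicitly, which is a sensible precaution when $W^{2,p}$ functions lack point values, but it does not change the argument.
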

\begin{proof}
Let $\T$ be the standard simplex mesh of $\Omega$, i.e. the Freudenthal triangulation \cite{freudenthal1942simplizialzerlegungen}. Assume that $\T$ is of size $h=\frac{1}{N}$, which means there are $(N+1)^n$ vertexes, and
\begin{equation}
    H_\T^i=n^2N-\frac{n(n+1)}{2},\quad H_\T^b=2n,\quad N_\T=N^n\cdot n!.
\end{equation}
Denote by $V_h$ the set of linear finite element functions on $\T$. The classical error estimate in finite element methods \cite{brenner2008mathematical,ciarlet2002finite} shows that
\begin{equation}
    \inf_{f\in V_h}\norm{f-v}_{L^p(\Omega)}\leq|v|_{W^{2,p}(\Omega)}\cdot \mathcal{O}(N^{-2}),
\end{equation}
consequently
\begin{equation}
\begin{split}
&\inf_{f\in{\rm FNN}(2n^2N-n^2+n,N^n\cdot n!+1)}\norm{f-v}_{L^p(\Omega)}\\=&\inf_{f\in{\rm FNN}(2H_\T^i+H_\T^b,N_\T+1)}\norm{f-v}_{L^p(\Omega)}\\\leq&\inf_{f\in V_h}\norm{f-v}_{L^p(\Omega)}\\\leq&|v|_{W^{2,p}(\Omega)}\cdot \mathcal{O}(N^{-2}).
\end{split}
\end{equation}
That is the estimate we need.
\end{proof}
\begin{remark}
Note that the number of nonzero parameters in $f$ is $\mathcal{O}(N^n)$ according to the construction of weak representation, as the weight matrix in the second layer is in fact sparse. Therefore the approximated $f$ has the same scale of free parameters as the degree of freedom on mesh $\T$. 
\end{remark}
Further exploration is expected in the future.

\subsection{Example}
We show two simple examples for the application of the theory. The first example is a 2-d convex polygon mesh for constant finite element functions shown in Figure \ref{fig:constant}. We observe that in this case there are 24 interior lines and 5 boundary lines, so that the number of neurons of the first layer is $2\times 24+5=53$. Moreover, there are 18 polygons in this mesh, hence the number of neurons of the second layer is $18+1=19$. Any constant finite element function on this mesh can be weakly represented by an FNN of size [2-53-19-1].

\begin{figure}[htbp]
\centering
\includegraphics[width=0.9\textwidth]{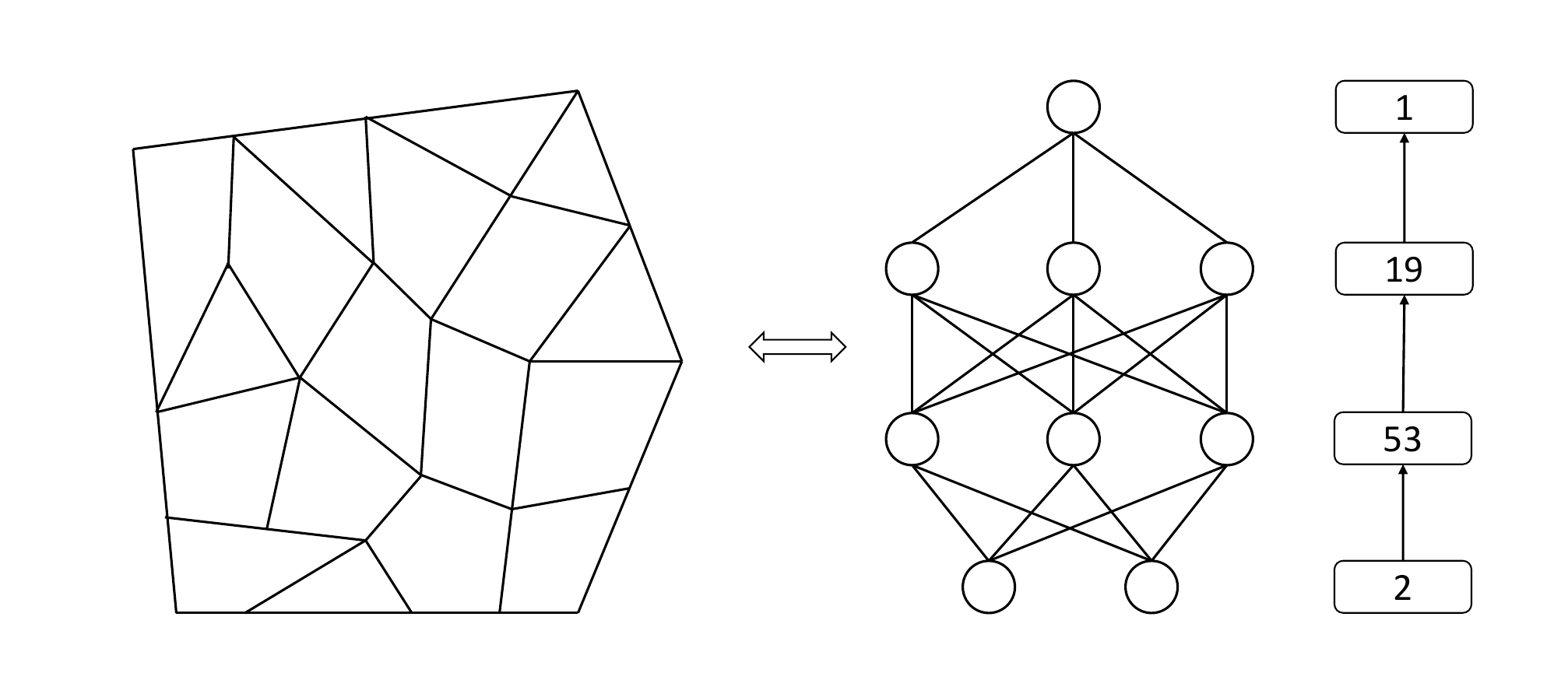}
\caption{A 2-d convex polygon mesh for constant finite element functions and its corresponding FNN size for weak representation.}
\label{fig:constant}
\end{figure}

The second example is a 2-d simplex mesh for linear finite element functions shown in Figure \ref{fig:linear}. In this case there are 13 interior lines and 4 boundary lines, so that the number of neurons of the first layer is $2\times 13+4=30$. Moreover, there are 32 simplexes in this mesh, hence the number of neurons of the second layer is $32+1=33$. Any linear finite element function on this mesh can be weakly represented by an FNN of size [2-30-33-1].

\begin{figure}[htbp]
\centering
\includegraphics[width=0.9\textwidth]{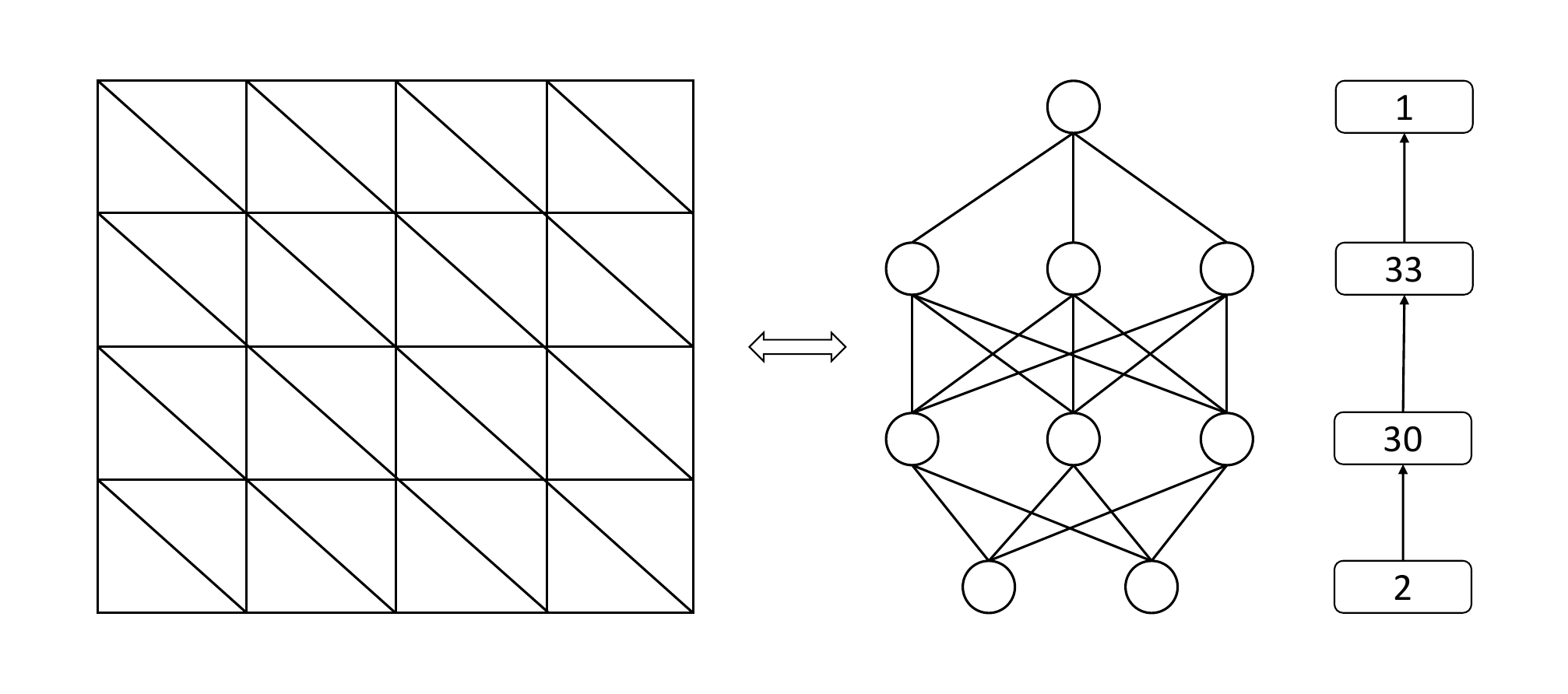}
\caption{A 2-d simplex mesh for linear finite element functions and its corresponding FNN size for weak representation.}
\label{fig:linear}
\end{figure}

\section{Tensor finite element}\label{sec:tensor}
The case of tensor finite element is quite easy to clarify compared to constant/linear finite element. Consider the product domain
\begin{equation}
\Omega=\Omega_1\times\Omega_2\times\cdots\times\Omega_n,
\end{equation}
where $\Omega_i=[a_i,b_i]$ is a 1-d interval, $1\leq i\leq n$. The tensor-type mesh of $\Omega$ can be written as
\begin{equation}
\T=\{\tau_{i_1,i_2,...,i_n}\}_{1\leq i_k\leq N_k},\quad \tau_{i_1,i_2,...,i_n}=[t_{i_1-1}^1,t_{i_1}^1]\times[t_{i_2-1}^2,t_{i_2}^2]\times\cdots\times[t_{i_n-1}^n,t_{i_n}^n],
\end{equation}
where
\begin{equation}
a_i=t_{0}^i<t_{1}^i<\cdots<t_{N_i}^i=b_i,\quad 1\leq i\leq n.
\end{equation}

Consider the tensor finite element functions on $\T$ which are continuous piecewise multilinear functions defined as
\begin{equation}
t_p(x_1,x_2,...,x_n):=\sum_{\delta=(\delta_1,\delta_2,...,\delta_n)\in\{0,1\}^n}c_\delta^\tau x_1^{\delta_1}x_2^{\delta_2}\cdots x_n^{\delta_n}\quad {\rm for}\quad (x_1,x_2,...,x_n)\in\tau,\tau\in\T,
\end{equation}
where $c_\delta^\tau\in\R$ is related to $\tau$, and $t_p$ is continuous over $\Omega$. Denote the set of all the multilinear tensor finite element functions on $\T$ by $\mathcal{U}_{\T}$. Note that $\mathcal{U}_{\T}$ is a linear space of dimension $\Pi_{i=1}^n(N_i+1)$, which equals the number of the vertexes in this mesh \cite{brenner2008mathematical}.

\subsection{Strict representation}
Different from the previous cases, tensor neural networks are easy to strictly represent the tensor finite element functions with clear size. Denote by $r$ the maximum rank of the $n$-order tensor with dimensions of $(N_1+1)\times(N_2+1)\times\cdots\times(N_n+1)$.
\begin{theorem}\label{thm:tnn}
${\rm TNN}^{r}(N_1+1,N_2+1,...,N_n+1)$ strictly represents $\mathcal{U}_{\T}$, i.e.,
\begin{equation}
{\rm TNN}^{r}(N_1+1,N_2+1,...,N_n+1)\supset\mathcal{U}_{\T}.
\end{equation}
\end{theorem}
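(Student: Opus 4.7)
The plan is to exploit the tensor-product structure of both $\mathcal{U}_{\T}$ and the TNN template. Every $t_p\in\mathcal{U}_{\T}$ admits a unique nodal expansion
\begin{equation*}
t_p(x_1,\ldots,x_n) = \sum_{j_1=0}^{N_1}\cdots\sum_{j_n=0}^{N_n} c_{j_1\cdots j_n}\,\psi_{j_1}^1(x_1)\cdots\psi_{j_n}^n(x_n),
\end{equation*}
where $\psi_{k}^i$ is the 1D continuous piecewise linear hat function at the node $t_k^i$. The coefficient array $C=(c_{j_1\cdots j_n})$ is an $(N_1+1)\times\cdots\times(N_n+1)$ tensor, so by the very definition of $r$ it admits a rank decomposition
\begin{equation*}
C = \sum_{j=1}^{r} u_1^{j}\otimes\cdots\otimes u_n^{j},\qquad u_i^{j}\in\R^{N_i+1}.
\end{equation*}
Substituting this back rewrites $t_p$ as a sum of $r$ separable products of 1D continuous piecewise linear functions, which matches exactly the algebraic shape of a rank-$r$ TNN. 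The remaining task is to realize each of these 1D factors with the prescribed hidden width $N_i+1$, under the constraint that $W_i$ and $b_i$ must be shared across all $r$ terms.

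For each coordinate $i$ I would fix the shared weights $W_i=(1,1,\ldots,1,-1)^{T}\in\R^{(N_i+1)\times 1}$ and $b_i=(-t_0^i,-t_1^i,\ldots,-t_{N_i-1}^i,C_i)^{T}\in\R^{N_i+1}$, with a chosen constant $C_i>b_i$. Restricted to $\Omega_i=[a_i,b_i]$, the $N_i+1$ scalar components of $\sigma(W_ix_i+b_i)$ consist of the hinge functions $\sigma(x_i-t_k^i)$ for $k=1,\ldots,N_i-1$, each with a genuine breakpoint at the interior node $t_k^i$, together with the two affine functions $\sigma(x_i-t_0^i)=x_i-a_i$ and $\sigma(C_i-x_i)=C_i-x_i$. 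A direct count shows these $N_i+1$ functions are linearly independent on $\Omega_i$ and span the full $(N_i+1)$-dimensional space of continuous piecewise linear functions with breakpoints in $\{t_1^i,\ldots,t_{N_i-1}^i\}$; in particular every hat function $\psi_k^i$ lies in their span. Hence for each $u_i^{j}$ one can solve a small linear system to produce $w_i^{j}\in\R^{1\times(N_i+1)}$ with
\begin{equation*}
w_i^{j}\sigma(W_ix_i+b_i) = \sum_{k=0}^{N_i}(u_i^{j})_k\,\psi_k^i(x_i)\quad\text{on }\Omega_i.
\end{equation*}

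Feeding these $w_i^{j}$ into the TNN template and expanding the product recovers $t_p$ on $\Omega$ term-by-term with rank $r$ and widths $(N_1+1,\ldots,N_n+1)$, which yields $t_p\in{\rm TNN}^{r}(N_1+1,\ldots,N_n+1)$. I expect the main obstacle to be the 1D basis construction in the middle step: the hidden width is exactly $N_i+1$, matching the dimension of the target hat-function space, so there is no slack. The construction must therefore force two of the $N_i+1$ ReLU components to degenerate into affine functions on $\Omega_i$ by pushing their breakpoints to or beyond the endpoints of the interval, contributing the two extra degrees of freedom that the $N_i-1$ interior hinges alone cannot supply. Once this tight 1D basis is secured, the final assembly step is immediate from the definition of tensor rank and requires no further work.
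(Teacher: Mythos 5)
Your proposal is correct and follows essentially the same route as the paper: nodal (hat-function) expansion, rank-$r$ decomposition of the coefficient tensor, and realization of each 1D continuous piecewise linear factor by a width-$(N_i+1)$ ReLU layer whose inner weights are shared across the $r$ terms. The only difference is cosmetic: for the two non-hinge degrees of freedom you use a second affine unit $\sigma(C_i-x_i)=C_i-x_i$, whereas the paper uses a constant unit $\sigma(0\cdot x+1)=1$ alongside $\sigma(x-t_0^i)$; both choices span the same $(N_i+1)$-dimensional space and lead to the same triangular solve for the outer weights.
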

\begin{proof}
Denote by $\phi_{i_1,i_2,...,i_n}$ the nodal function corresponding to $(t_{i_1}^1,t_{i_2}^2,...,t_{i_n}^n)$, $0\leq i_k\leq N_k$. Here $\phi_{i_1,i_2,...,i_n}(t_{i_1}^1,t_{i_2}^2,...,t_{i_n}^n)=1$ and $\phi_{i_1,i_2,...,i_n}(t_{j_1}^1,t_{j_2}^2,...,t_{j_n}^n)=0$ for $(i_1,i_2,...,i_n)\neq(j_1,j_2,...,j_n)$. Given any $u\in\mathcal{U}_{\T}$, it can be expressed as
\begin{equation}
u=\sum_{0\leq i_k\leq N_k,1\leq k\leq n}c_{i_1,i_2,...,i_n}\phi_{i_1,i_2,...,i_n}.
\end{equation}
We further consider the nodal functions for 1-d intervals. Denote by $\phi_i^k$ the 1-d nodal function corresponding to $t_i^k$ for interval $[a_k,b_k]$, $0\leq i\leq N_k$, $1\leq k\leq n$. Here $\phi_i^k(t_i^k)=1$ and $\phi_i^k(t_j^k)=0$ for $i\neq j$. Then we have
\begin{equation}
\phi_{i_1,i_2,...,i_n}=\phi_{i_1}^1\otimes\phi_{i_2}^2\otimes\cdots\otimes\phi_{i_n}^n,
\end{equation}
thus
\begin{equation}
u=\sum_{0\leq i_k\leq N_k,1\leq k\leq n}c_{i_1,i_2,...,i_n}\phi_{i_1}^1\otimes\phi_{i_2}^2\otimes\cdots\otimes\phi_{i_n}^n.
\end{equation}
Since
\begin{equation}
(c_{i_1,i_2,...,i_n})_{(N_1+1)\times(N_2+1)\times\cdots\times(N_n+1)}
\end{equation}
is a tensor of order $n$ with shape $(N_1+1)\times(N_2+1)\times\cdots\times(N_n+1)$, by the definition of $r$, we can find $c_{k,i_k}^p\in\R$, $0\leq i_k\leq N_k$, $1\leq k \leq n$, $1\leq p \leq r$, such that
\begin{equation}
c_{i_1,i_2,...,i_n}=\sum_{p=1}^rc_{1,i_1}^pc_{2,i_2}^p\cdots c_{n,i_n}^p,\quad 0\leq i_k\leq N_k,1\leq k\leq n.
\end{equation}
Subsequently, we have
\begin{equation}
\begin{split}
u=&\sum_{p=1}^r\sum_{0\leq i_k\leq N_k,1\leq k\leq n}c_{1,i_1}^pc_{2,i_2}^p\cdots c_{n,i_n}^p\phi_{i_1}^1\otimes\phi_{i_2}^2\otimes\cdots\otimes\phi_{i_n}^n\\
=&\sum_{p=1}^r\left(\sum_{i_1=0}^{N_1}c_{1,i_1}^p\phi_{i_1}^1\right)\otimes\left(\sum_{i_2=0}^{N_2}c_{2,i_2}^p\phi_{i_2}^2\right)\otimes\cdots\otimes\left(\sum_{i_n=0}^{N_n}c_{n,i_n}^p\phi_{i_n}^n\right).
\end{split}
\end{equation}
Now consider the $f^p_k:=\sum_{i_k=0}^{N_k}c_{k,i_k}^p\phi_{i_k}^k$, which is exactly a continuous piecewise linear function on $\Omega_k$ with nodes $t^k_i$, and satisfies $f_k^p(t_i^k)=c^p_{k,i}$, $0\leq i\leq N_k$. Denote
\begin{equation}
W_k:=(1,1,...,1,0)^T\in\R^{N_k+1},\quad b_k:=(-t_0^k,-t_1^k,...,-t_{N_k-1}^k, 1)^T\in\R^{N_k+1},
\end{equation}
and
\begin{equation}
w_k^p:=(\mu_0^p,\mu_1^p,...,\mu_{N_k}^p),
\end{equation}
which is the solution of the linear system
\begin{equation}
\begin{pmatrix}
0 & & & & & 1 \\
t_1^k-t_0^k & 0 & & & & 1\\
t_2^k-t_0^k & t_2^k-t_1^k & 0 & & & 1\\
\vdots & \vdots & \ddots & \ddots & & \vdots\\
t_{N_k-1}^k-t_0^k & t_{N_k-1}^k-t_1^k & \cdots & t_{N_k-1}^k-t_{N_k-2}^k & 0 & 1\\
t_{N_k}^k-t_0^k & t_{N_k}^k-t_1^k & \cdots & t_{N_k}^k-t_{N_k-2}^k& t_{N_k}^k-t_{N_k-1}^k & 1\\
\end{pmatrix}\begin{pmatrix}
\mu_0^p\\
\mu_1^p \\
\vdots \\
\mu_{N_k}^p
\end{pmatrix}=\begin{pmatrix}
c_{k,0}^p \\
c_{k,1}^p \\
\vdots \\
c_{k,N_k}^p \\
\end{pmatrix}.
\end{equation}
One can check that $l^p_k(x):=w_k^p\sigma(W_kx+b_k)=f_k^p(x)$ for $x\in[a_k,b_k]$. Therefore
\begin{equation}
u=\sum_{p=1}^rl_1^p\otimes l_2^p\otimes\cdots\otimes l_n^p\in{\rm TNN}^{r}(N_1+1,N_2+1,...,N_n+1),
\end{equation}
and the proof is finished.
\end{proof}
Although we provide a definite $r$ for the size of TNN, it is in fact not computable for high-dimensional case, since how to determine the maximum rank of high-order tensors is still a difficult unresolved problem \cite{haastad1989tensor}. With Theorem \ref{thm:tnn}, we can also study the approximation capability of TNNs by investigating tensor finite element functions.

\subsection{Example}
We show an example of the strict representation for tensor finite element functions via TNNs. The mesh is presented in Figure \ref{fig:tensor}. Since it is a 4-by-5 rectangular mesh, and the maximum rank of the 4-by-5 matrix is 4, we know $r=4$. In addition, the numbers of neurons of the two branch nets are 4 and 5, respectively. Therefore the corresponding size is [1-4-4] for the first branch net and [1-5-4] for the second branch net.  

\begin{figure}[htbp]
\centering
\includegraphics[width=1.0\textwidth]{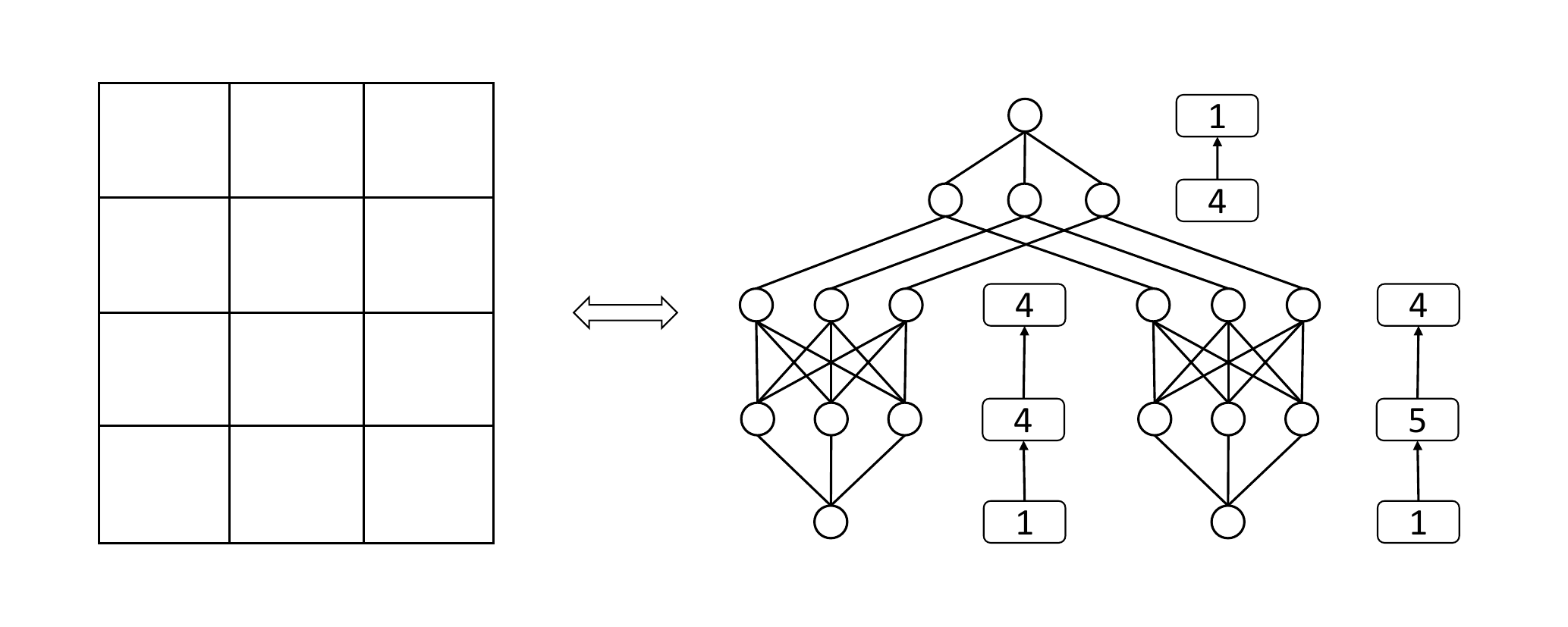}
\caption{A 2-d tensor-type mesh for continuous piecewise bilinear functions and its corresponding TNN size for strict representation.}
\label{fig:tensor}
\end{figure}

\section{Conclusions}\label{sec:conclusions}
We discussed the relationship between the ReLU NNs and the finite element functions. The main content has two parts: The first part focuses on the two-hidden-layer ReLU NNs and the constant/linear finite element functions. We gave the concept of weak representation and proved that piecewise linear functions on a convex polytope mesh can be weakly represented by two-hidden-layer ReLU NNs. In addition, the numbers of neurons of the two hidden layers required to weakly represent were accurately given based on the numbers of polytopes and hyperplanes involved in this mesh. Such weak representation leads to a perspective for analyzing approximation capability of ReLU NNs in $L^p$ norm via finite element functions. The second part shows that the recent tensor neural networks can strictly represent the tensor finite element functions. Furthermore, for constant, linear and tensor finite element functions, several specific examples were presented and demonstrated how to compute the numbers of neurons for representation given meshes.

\bibliographystyle{abbrv}
\bibliography{references}

\end{document}